 \newtheorem{ittheorem}{Theorem}
 \newtheorem{itlemma}{Lemma}
 \newtheorem{itproposition}{Proposition}
 \newtheorem{itdefinition}{Definition}
 \newtheorem{itremark}{Remark}
 \newtheorem{itclaim}{Claim}
 \newtheorem{itcorollary}{\bf Corollary}
\def\st{\widetilde\sigma}
\def\Ga{\Gamma}
 \newenvironment{theorem}{\addtocounter{equation}{1}
 \begin{ittheorem}}{\end{ittheorem}}
 \newenvironment{lemma}{\addtocounter{equation}{1}
 \begin{itlemma}}{\end{itlemma}}
 \newenvironment{proposition}{\addtocounter{equation}{1}
 \begin{itproposition}}{\end{itproposition}}
 \newenvironment{definition}{\addtocounter{equation}{1}
 \begin{itdefinition}}{\end{itdefinition}}
 \newenvironment{remark}{\addtocounter{equation}{1}
 \begin{itremark}}{\end{itremark}}
 \newenvironment{claim}{\addtocounter{equation}{1}
 \begin{itclaim}}{\end{itclaim}}
 \newenvironment{proof}{\noindent {\bf Proof.\,}
 }{\hspace*{\fill}$\qed$\medskip}
 \newenvironment{corollary}{\addtocounter{equation}{1}
 \begin{itcorollary}}{\end{itcorollary}}
 \newcommand{\be}[1]{\begin{eqnarray}\label{#1}}
 \newcommand{\ee}{\end{eqnarray}}
 \newcommand{\bl}[1]{\begin{lemma}\label{#1}}
 \newcommand{\el}{\end{lemma}}
 \newcommand{\br}[1]{\begin{remark}\label{#1}}
 \newcommand{\er}{\end{remark}}
 \newcommand{\bt}[1]{\begin{theorem}\label{#1}}
 \newcommand{\et}{\end{theorem}}
 \newcommand{\bd}[1]{\begin{definition}\label{#1}}
 \newcommand{\ed}{\end{definition}}
 \newcommand{\bcl}[1]{\begin{claim}\label{#1}}
 \newcommand{\ecl}{\end{claim}}
 \newcommand{\bp}[1]{\begin{proposition}\label{#1}}
 \newcommand{\ep}{\end{proposition}}
 \newcommand{\bc}[1]{\begin{corollary}\label{#1}}
 \newcommand{\ec}{\end{corollary}}
 \newcommand{\bpr}{\begin{proof}}
 \newcommand{\epr}{\end{proof}}
 \newcommand{\bi}{\begin{itemize}}
 \newcommand{\ei}{\end{itemize}}
 \newcommand{\ben}{\begin{enumerate}}
 \newcommand{\een}{\end{enumerate}}
\newcommand{\diam}{\, {\rm diam}_\infty\, }
 \def \ba {\begin{array}}
 \def \ea {\end{array}}
 \def \qed {{\heartsuit\hfill}}
 \def \Z {{\mathbb Z}}
 \def \N {{\mathbb N}}
 \def \P {{\mathbb P}}
 \def \G {{\Gamma}}
 \def \L {{\Lambda}}
 \def \k {{\kappa}}
 \def \a {{\alpha}}
 \def \b {{\beta}}
 \def \e {{\epsilon}}
 \def \r {{\rho}}
 \def \h {{\eta}}
 \def \s {{\sigma}}
 \def \t {{\tau}}
\def\boot{\text{bootstrap}}
\newcommand{\dinf}{\, {\rm d}_{\infty}\, }
\let\a=\alpha \let\b=\beta     \let\e=\varepsilon
  \let\h=\eta    \let\k=\kappa  
\let\r=\rho  \let\s=\sigma \let\t=\tau   
 \let\x=\xi 
   \let\G=\Gamma  \let\L=\Lambda 
\def \sandw {\widetilde \s^{=}}
\def \floo {\widetilde \s^{-}}
\def \ses {{\hbox{\footnotesize\rm SES}}}
\def \qed {{\square\hfill}}
\def \SES {{\hbox{\footnotesize\rm SES}}}
\def\tonda#1{\left( #1 \right)}
\def\sgraffa#1{\{ #1 \}}
\def\nota#1{\marginpar{\tiny #1}}
\def\Acapo{\nonumber\\}
\def\eqref#1{(\ref{#1})}
\begin{document}

\title{A $d$--dimensional nucleation and growth model}

 \author{Rapha\"el Cerf 
 \footnote{
 Universit\'e de Paris-Sud, Math\'ematique, 
B\^at. 425, 91405 Orsay Cedex,
France
\newline
 \indent\quad
E-mail: rcerf@math.u-psud.fr}\,\,\, and Francesco Manzo
\footnote{\noindent
Universit\`a di Roma ``Tor Vergata'',
dipartimento di matematica, via della ricerca scientifica 00133 Roma Italy,
E-mail: manzo.fra@gmail.com }
}

\maketitle

\begin{abstract}
	We analyze the relaxation time of a ferromagnetic
	$d$--dimensional growth model on the lattice.
The model is characterized by $d$ parameters which represent the 
activation energies of a site, depending on the number of occupied 
nearest neighbours.
This model is a natural generalisation of the 
model studied 
by Dehghanpour and Schonmann 
\cite{DS1}, where the activation energy of a site with more
than two occupied neighbours is zero.


\end{abstract}

\section{Introduction}
Growth models have been extensively studied in many cases of 
physical relevance. Our model can be obtained with
a particular choice of the parameters for
Richardson's model on the lattice \cite{Rich} and it is closely
related to the models studied by Eden \cite{Eden},
Kesten and Schonmann \cite{KS}, and specifically
Dehghanpour and Schonmann \cite{DS1}, with which it shares
the same physical motivation, i.e.,
the study of the relaxation from a 
metastable state to the stable phase of a thermodynamic 
ferromagnetic system.  
In many  physical  cases, this event is triggered by the formation,
growth and coalescence of many droplets of the stable
phase in the midst of the metastable one.
The model we study in this paper is inspired by the metastable behavior
of the kinetic Ising model in the infinite-volume regime for 
small magnetic field and vanishing
temperature.
This regime
was studied by Dehghanpour and Schonmann 
in the two dimensional case \cite{DS2}.
The main ideas were presented in 
a simplified model in \cite{DS1}. We study here the 
model 
corresponding to the $d$--dimensional case.
There are several problems to extend the approach of Dehghanpour and
Schonmann when there are more than two activation energies.
One of them is to control the
speed of growth of large supercritical droplets. 
In the model with two activation energies,
this was achieved with the technology of the ``chronological paths''
introduced by Kesten and Schonmann \cite{KS}. We did not manage to adapt
this technology to deal with the three dimensional Ising model. In this paper,
we present an alternative new strategy to control the speed of growth.
This strategy relies on coupling arguments, where we consider specific
boundary conditions called sandwich boundary conditions, as was done
to analyze the bootstrap percolation model \cite{CeCi, CM}.
We hope to apply this strategy to control the growth of the supercritical droplets in the context of the three dimensional
Ising model 
in the regime of low temperatures.

The model is an irreversible gas model on the lattice
$\Z^d$.
Sites are occupied at exponential times 
with rates that depend on the number of occupied neighbors.
More precisely, our model is characterized by a 
set of parameters $\G_n$, $n=0,\dots, d$ that represent
the activation energy of a ``critical droplet" in
dimension $n$. 
When a site has $i\leq d$ occupied neighbors,
its occupation rate is $\exp(-\b \G_{d-i})$. 
When a site has $d$ or more occupied neighbors,
its occupation rate is $1$.
A natural choice for ferromagnetic systems is to assume
$$\G_0\,\leq\,
\Gamma_1\,\leq\,\cdots\,\leq\, \G_d\,.$$
We start from the void configuration in infinite volume or in
a finite cube and look at the 
time $\t_d$ when a given site, for instance the origin, is occupied.
The scaling behavior of $\tau_d$ as $\beta$ goes to $\infty$
can be obtained with the help of the following simplified heuristics.
The rate of creation of nuclei (namely, isolated occupied sites) is 
$\exp(-\b \G_{d})$. 
Once a nucleus has appeared, it starts to grow, yet its speed of growth
increases with its size. Let $l(\tau)$ be the typical diameter of
a droplet grown from a nucleus after a time $\tau$. At time $2\tau$,
the origin is likely to have been reached by any nucleus created at distance
$l(\tau)$ before time $\tau$. The relaxation time $\tau_d$ 
should be such that the rate of creation of a nucleus within
the space time cone
$
l(\tau_d)^d\times\tau_d$
is of order one. It turns out that 
$l(\tau)$ behaves as follows when
$\beta$ goes to $\infty$:
$$
l\big(\exp(\beta K)\big)\,\sim\,
\left\{
\begin{matrix}
  1 &\quad
  \text{if $K<\Gamma_{d-1}$}
\\
\,\,\exp\big(\beta( K-\kappa_{d-1})\big)
   &\quad
  \text{if $K\geq\Gamma_{d-1}$}
\\
\end{matrix}
\right.
$$
Suppose that $\tau_d$ scales as 
$\exp(\beta \kappa_d)$
when
$\beta$ goes to $\infty$.
The value $\kappa_d$ will be
the smallest value $K$ such that
$$
l\big(\exp(\beta K)\big)^d
\exp(\beta K)
\exp(-\beta \Gamma_d)
$$
is of order $1$.
Since 
$\Gamma_d\geq
\Gamma_{d-1}$, then 
$K$ has to be larger than
$\Gamma_{d-1}$, and it satisfies therefore
$$
\exp\big(d\beta( K-\kappa_{d-1})\big)
\exp(\beta K)
\exp(-\beta \Gamma_d)
\,=\,1\,.
$$
This equation yields
$$K\,=\,
\frac{\Gamma_d+d\kappa_{d-1}}{d+1}\,.
$$
We conclude finally that
$$\kappa_d\,=\,\max\Big(
\Gamma_{d-1},
\frac{\Gamma_d+d\kappa_{d-1}}{d+1}
\Big)\,.
$$
\section{Main result}
Our configuration space is 
$\sgraffa{0,1}^{\Lambda}$, where $\Lambda$ is a subset of $\Z^d$
(possibly equal to $\Z^d$ itself).
A configuration is thus a map $\sigma:\Lambda\to \{\,0,1\,\}$,
and a site $x\in\Lambda$ is empty 
(respectively occupied) 
in the configuration $\sigma$
if $\sigma(x)=0$ (respectively $\sigma(x)=1$). 
Sites which are occupied remain occupied forever. 
To define the dynamics,
we consider a family of i.i.d. Poisson processes with rate one, associated 
with the sites in $\Z^d$.
For $x\in \Z^d$, $i \geq 1$, we denote by $\t(x,i)$
the $i$--th arrival time of the Poisson process associated with $x$.
With each arrival time, we associate a uniform random variable
$U(x,i)$ in $[0,1]$, independent of the Poisson processes 
and of the other uniform variables. 
We build a Markov process $(\sigma_{\L,t})_{t\geq 0}$
with the help of these random objects.  
At time $0$, we start from the empty configuration:
$$\forall x\in\Z^d\qquad \sigma_{\L,0}(x)=0\,.$$
We describe now the updating procedure of our process.
Let $N(x,\sigma)$ be the number of occupied neighbors of the site $x$ 
in the configuration~$\sigma$, i.e.,
$$N(x,\sigma)\,=\,
\sum_{y\in\L:|x-y|=1}\sigma(y)\,.$$
The rate at which a site becomes occupied depends only on the number of
its occupied neighbors. These rates are given by a non--decreasing
sequence
$$c(0)\,\leq\,c(1)\,\leq\,\cdots\,\leq\,c(2d)\,.$$
A site $x$ can become occupied only at a time corresponding to an
arrival of its associated Poisson process. 
Suppose that $t=\t(x,i)$ for some $i\geq 1$ and that $x$ was not occupied
before time $t$. With probability one, all the arrival times are distinct
and only the state of the site $x$ can change at time $t$.
If
$$U(x,i)\,\leq\,c\big(N(x,
\sigma_{\L,t}(x))\big)$$
then $x$ becomes occupied at time~$t$, otherwise it stays vacant.
If the set $\L$ is finite, the above rules define 
a Markov process $(\sigma_{\L,t})_{t\geq 0}$.
Whenever $\L$ is infinite, one has to be more careful, because
there is an infinite number of arrival times in any finite
time interval and 
it is not possible to order them in an increasing sequence.
However, because the rates are bounded, changes in the system
propagate at a finite speed, and 
a Markov process 
can still
be defined by taking the limit of finite volume processes
(see \cite{L} for more details).
Whenever $\L=\Z^d$, we drop it from the notation, and we
write
$(\sigma_{t})_{t\geq 0}$ for the infinite volume process in $\Z^d$. 
We will deal with exponentially small rates. However we need to have
a sufficiently loose asymptotic condition in order to perform our
inductive proof, so that we can compare the process in dimension $d$ with a
$d-1$ dimensional process satisfying the same condition.
\medskip

\noindent
{\bf Hypothesis on the rates.}
We suppose that the occupation rates $c(n)$, $0\leq n\leq 2d$,
depend on a parameter $\beta>0$ and that
the following limits exist:
\begin{align*}\label{hr1}
\forall n\in\{\,0,\dots,d\,\}\qquad
&\lim_{\beta\to\infty}\,\,
\frac{1}{\beta}\ln c_\beta(n)\,=\,
-\G_{d-n}\,,\cr
\forall n\in\{\,d,\dots,2d\,\}\qquad
&\lim_{\beta\to\infty}\,\,
\frac{1}{\beta}\ln c_\beta(n)\,=\,
0\,.
\end{align*}
Moreover, we suppose that 
$$\G_0\,\leq\,
\Gamma_1\,\leq\,\cdots\,\leq\, \G_d\,.$$
%
For $0\leq n\leq d$, 
the parameter $\G_n$ represents
the activation energy of a critical droplet in
dimension~$n$.
The conditions imposed on the sequence 
$\G_n$, $0\leq n\leq d$,
simplify substantially the analysis and
they are satisfied by the growth model associated to the 
metastability problem for the
low--temperature Ising model.
We define a sequence of critical constants $\kappa_i$ for
$0\leq i\leq d$ by setting $\kappa_0=\Gamma_0$ and
$$\forall i\in\{\,1,\dots,d\,\}\qquad
\kappa_i\,=\,\max\Big(
\Gamma_{i-1},
\frac{\Gamma_i+i\kappa_{i-1}}{i+1}
\Big)\,.
$$
Thus we have
$$\displaylines{ 
\kappa_d\,=\,
\max\Big(
{\Gamma_{d-1}},
\frac{\Gamma_d+d\Gamma_{d-2}}{d+1},\dots,
\frac{\Gamma_d+\cdots+
\Gamma_{d-i}+
(d-i)\Gamma_{d-i-2}}{d+1},
\dots,
\hfill\cr
\frac{\Gamma_d+\cdots+
\Gamma_{3}+
3\Gamma_{1}
}{d+1},
\frac{\Gamma_d+\cdots+
\Gamma_{2}
+2\Gamma_{0}
}{d+1},
\frac{\Gamma_d+\cdots+
\Gamma_{1}
+\Gamma_{0}
}{d+1}
\Big)\,.
}$$
Our main result states that, in infinite volume, the relaxation time
of the system scales as
$\exp(\beta \k_d)$.

\bt{T1iv}{(\bf Infinite volume.)}
Let $\kappa>0$ and let $\tau_\beta=\exp(\beta \k)$.
\par\noindent
$\bullet$ If $\kappa<\kappa_d$, then
$$\lim_{\beta\to\infty}\,\,
	  \P\tonda{\s_{\tau_\beta}(0)=1}\,=\,0\,.$$
\par\noindent
$\bullet$ If $\kappa>\kappa_d$, then
$$\lim_{\beta\to\infty}\,\,
	  \P\tonda{\s_{\tau_\beta}(0)=0}\,=\,0\,.$$
\et
The first step of the proof consists in reducing the problem to some
growth processes in a finite volume.
Indeed, 
if 
$\kappa<K$ and we set 
\[
\tau_\beta=\exp(\beta \k)\,,\qquad
  \L_\beta=\L(\exp{\b K})
  \,,\]
then
\begin{equation*}
\lim_{\beta\to{\infty}}\,\,
    \P \tonda{\s_{\tau_\beta}(0) \, =\, \s_{\L_\beta,\tau_\beta}(0)}\, =\, 1\,.
\end{equation*}
This follows from
a simple large-deviation estimate
based on the fact that the maximum rate in the model is~$1$, 
see lemma~$1$ of \cite{DS2}
for the complete proof. 
Let us shift next our attention to finite volumes.
We have two possible scenarios for the growth process 
in order to fill completely a cube.
If the cube is small,
the system relaxes via the formation of a single nucleus
that grows until filling the entire volume. 
If the cube is large, a more efficient mechanism 
consists in
creating many droplets that grow and eventually coalesce.
The critical side length of the cubes separating these two mechanisms
scales exponentially
with $\b$ as $\exp(\b L_d)$, where
$$L_d\,=\,
\frac{\Gamma_d-\kappa_{d}}{d}
\,.$$
There are three main factors controlling the relaxation time:
\medskip

\noindent
{\bf Nucleation.}
Within a 
box of sidelength $\exp(\beta L)$, the typical time when
the first nucleus appears is of order
$\exp(\beta (\Gamma_d-dL))$.

\noindent
{\bf Initial growth.}
The typical time to grow a nucleus into a droplet travelling
at the asymptotic speed
is
$\exp(\beta \Gamma_{d-1})$.

\noindent
{\bf Asymptotic growth.}
A droplet travelling
at the asymptotic speed 
covers a region of diameter
$\exp(\beta L)$ in a time
$\exp(\beta (L+\kappa_{d-1}))$.
\medskip

\noindent
The statement concerning the nucleation time contains no mystery.
Let us try to explain the statements on the growth of the droplets.
Once a nucleus is born, it starts to grow at speed
$\exp(-\beta \Gamma_{d-1})$. As the droplet grows, the speed of growth
increases, because the number of choices for the creation of a new
protuberance attached to the droplet is of order the surface of the droplet.
Thus the speed of growth of a droplet of size
$\exp(\beta K)$ is
$$\exp(\beta (K(d-1)-\Gamma_{d-1}))\,.$$
When $K$ reaches the value $L_{d-1}$, the speed of growth is limited
by the time needed for the protuberance to cover an entire face of the
droplet. This time corresponds to the $d-1$ relaxation time and the droplet
reaches its asymptotic speed, of order
$\exp(-\beta \kappa_{d-1})$.
The time needed  
to grow a nucleus into a droplet travelling
at the asymptotic speed
is
$$\sum_{1\leq i\leq
\exp(\beta L_{d-1})}
\exp
\beta\Big(
\Gamma_{d-1}-
\frac{d-1}{\beta}\ln i
\Big)$$
and it is still of order
$\exp(\beta \Gamma_{d-1})$.
With the help of the above facts, we can obtain easily an upper bound
on the relaxation time in a
box $\Lambda_\beta$ of sidelength $\exp(\beta L)$.
Indeed, the relaxation time is smaller than
the sum
$$\displaylines{ 
\left(
\begin{matrix}
\text{time for nucleation}\\
\text{in the box $\L_\beta$
}
\end{matrix}
\right)
\,+\,
\left(\!
\begin{matrix}
\text{
time to grow a nucleus 
}\\
\text{
into a droplet 
travelling
}\\
\text{
at the asymptotic speed
}
\end{matrix}
\!\right)
\,+\,
\left(
\!
\begin{matrix}
\text{
time to cover 
}\\
\text{
the box $\Lambda_\beta$
}
\end{matrix}
\!\right)
\cr
\,\sim\,
\exp(\beta (\Gamma_{d}-dL))\,+\,
\exp(\beta \Gamma_{d-1})
\,+\,
\exp(\beta (L+\kappa_{d-1}))
\,
}$$
which is of order
$$\exp\Big(\beta 
\max\big(\Gamma_{d}-dL,
\Gamma_{d-1},
L+\kappa_{d-1}
\big)\Big)
\,.
$$
Optimizing 
over the size of the box $\Lambda_\beta$, we
conclude that 
the relaxation time in infinite volume
satisfies
\begin{equation*}
\tau_d\,\leq\,\exp\Big(
\beta\inf_L\,\max\big(\Gamma_{d}-dL,
\Gamma_{d-1},
L+\kappa_{d-1}
\big)\Big)
\,.
\end{equation*}
Let us now try
to obtain a lower bound
on the relaxation time. 
Suppose that we examine the state of the origin 
at a time
$\exp(\beta \kappa)$. 
The origin becomes occupied when it is covered
by a droplet. This droplet can result either from the growth of a single
nucleus or from the coalescence of several droplets. 
Since the speed of propagation of the effects is finite, the state of
the origin 
at time
$\exp(\beta \kappa)$ 
is unlikely to have been influenced by any event occurring
outside the box
of sidelength $\exp(2\beta \kappa)$. 
Thus all the subsequent computations
can be restricted to this box. 
In particular, a droplet which covers
the origin 
before time
$\exp(\beta \kappa)$ 
has to be born inside this box, meaning that the oldest site 
of the droplet belongs to this box.
Let us consider
the box $\Lambda_\beta$
of sidelength $\exp(\beta L)$.
We can envisage two scenarios.
If the droplet which covers the origin is born inside
the box $\Lambda_\beta$, then nucleation has occurred inside this box.
If the droplet which covers the origin is born outside
the box $\Lambda_\beta$, then it has grown into a droplet of diameter
at least
$\frac{1}{2}\exp(\beta L)$ in order to reach the origin.
Thus the relaxation time is larger than
$$\displaylines{ 
\min\left(
\left(
\begin{matrix}
\text{time for nucleation}\\
\text{in the box $\L_\beta$
}
\end{matrix}
\right)
\,,\,
\left(
\begin{matrix}
\text{
time to grow a nucleus 
into}\\
\text{
a droplet 
of diameter
$\frac{1}{2}\exp(\beta L)$
}\\
\end{matrix}
\right)
\right)
\cr
\,\sim\,
\min\Big(
\exp(\beta (\Gamma_{d}-dL))\,,\,
\exp(\beta \Gamma_{d-1})
\,+\,
\frac{1}{2}
\exp(\beta (L+\kappa_{d-1}))\Big)
}$$
which is of order
$$\exp\Big(\beta 
\min\big(\Gamma_{d}-dL,
\max(
\Gamma_{d-1},
L+\kappa_{d-1}
)\big)\Big)
\,.
$$
By optimizing 
over the size of the box $\Lambda_\beta$, we
conclude that 
the relaxation time in infinite volume
satisfies
\begin{equation*}
\tau_d\,\geq\,\exp\Big(
\beta
\sup_L\,
\min\big(\Gamma_{d}-dL,
\max(
\Gamma_{d-1},
L+\kappa_{d-1}
)\big)
\Big)
\,.
\end{equation*}
Since the optimal value of $L$ solves 
$\G_d-dL=L+\k_{d-1}$,
the two constants appearing in the exponential
in the lower and upper bounds for the relaxation time
coincide, they are equal to
$$\kappa_d\,=\,\max\Big(
\Gamma_{d-1},
\frac{\Gamma_d+d\kappa_{d-1}}{d+1}
\Big)\,.
$$
We state next precisely the finite volume results
that we will prove.
\medskip

\noindent {\bf Terminology.}
We say that a probability $\P(\cdot)$ is exponentially small
in $\beta$ (written in short ES) if it satisfies
$$\limsup_{\beta\to\infty}\,\,
\frac{1}{\beta}\ln
	  \P(\cdot)\,<\,0\,.$$
We say that a probability $\P(\cdot)$ is super--exponentially small
in $\beta$ (written in short SES) if it satisfies
$$\lim_{\beta\to\infty}\,\,
\frac{1}{\beta}\ln
	  \P(\cdot)\,=\,-\infty\,.$$
\bt{T1fv}{(\bf Exponential volume.)}
Let $L>0$ and let $\L_\beta=\L(\exp(\beta L))$ be a cubic
box of sidelength $\exp(\beta L)$.
Let $\kappa>0$ and let $\tau_\beta=\exp(\beta \k)$.
\par\noindent
$\bullet$ If $\kappa<\max(\Gamma_d-dL,\kappa_d)$, then
\begin{equation*}
\lim_{\beta\to\infty}\,\,
	  \P\tonda{\s_{\L_\beta;\tau_\beta}(0)=1}\,=\,0\,
\end{equation*}
and this probability is exponentially small in $\beta$.
\par\noindent
$\bullet$ If $\kappa>\max(\Gamma_d-dL,\kappa_d)$, then
\begin{equation*}
\lim_{\beta\to\infty}\,\,
\P\tonda{
	  \exists\, x\in\L_\beta
	  \quad
	  \s_{\L_\beta;\tau_\beta}(x)=0}\,=\,0\,
\end{equation*}
and this probability is super--exponentially small in $\beta$.
\et
%
%
The hardest part of theorem~\ref{T1fv} is the upper bound on the relaxation
time, i.e., the first case
where $\kappa<
\max(\Gamma_d-dL,\kappa_d)$.
The first ingredient in the proof is a lower bound on the time needed to
create a large droplet.
\begin{proposition} 
\label{inigro}
Let $L>0$ and let $\L_\beta=\L(\exp(\beta L))$ be a cubic
box of sidelength $\exp(\beta L)$.
Let $\kappa<\Gamma_{d-1}$
and let $\tau_\beta=\exp(\beta \k)$.
The probability that an occupied cluster in
	  $\s_{\L_\beta;\tau_\beta}$
has diameter larger than $\b$ is super--exponentially
small in $\beta$.
\end{proposition}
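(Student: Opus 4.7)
The plan is a Peierls-type argument exploiting two ingredients: a geometric lower bound on the number of ``slow'' activation events any large-diameter cluster must contain, and a per-site bound on the probability of a slow activation that is exponentially small in $\beta$ and mutually independent across sites. Throughout, set $\eta := \Gamma_{d-1} - \kappa > 0$.

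The key geometric observation is that if $C$ is a cluster at time $\tau_\beta$, that is, a connected component of the occupied region, then at the moment any $y \in C$ is activated, every occupied neighbour of $y$ already lies in $C$: an occupied neighbour outside $C$ would be connected to $y$ thereafter and thus belong to $C$ at $\tau_\beta$, a contradiction. Classifying each $y \in C$ by the number $n_y^C \in \{0, 1, \ldots, 2d\}$ of its occupied neighbours in $C$ at the instant of $y$'s activation, let $k$ count the nucleations ($n_y^C = 0$, rate $c_\beta(0)$) and $s$ the Type-$1$ events ($n_y^C = 1$, rate $c_\beta(1)$) in the history of $C$.

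The central combinatorial estimate I would prove is $\diam(C) \leq 2k+s$. To see this, track $S(t) := \sum_{C'} \diam(C')$ summed over all clusters alive at time $t$: a nucleation contributes $0$ to $\Delta S$; a Type-$1$ event contributes at most $1$; a Type-$\geq 2$ event inside a single cluster contributes $0$, because a site outside the $\ell_\infty$ bounding box of any $X \subset \Z^d$ has at most one neighbour inside $X$, so a site with two or more occupied neighbours from one cluster must lie in that cluster's bounding box; and a merging Type-$\geq 2$ event contributes at most $2$ by the triangle inequality applied at the merging site. Since at most $k - 1$ mergers can occur in assembling $C$ from its $k$ nucleations, $\diam(C) \leq S(\tau_\beta) \leq s + 2(k-1) < 2k + s$. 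In particular $\diam(C) > \beta$ forces $k + s \geq \beta/2$.

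For each site $y \in \Z^d$ let $E_y$ be the event that some Poisson ring of $y$ in $[0, \tau_\beta]$ carries a uniform mark $U(y, \cdot) \leq c_\beta(1)$. By construction, a slow activation at $y$ (one with $n_y \leq 1$) requires $E_y$; the events $\{E_y\}_y$ are independent, and $\P(E_y) \leq \tau_\beta\, c_\beta(1) = \exp(-\beta\eta)$. Hence the existence of a cluster in $\Lambda_\beta$ of diameter greater than $\beta$ forces some $\ell_\infty$ box $B \subset \Lambda_\beta$ of side at most (say) a polynomial in $\beta$ to contain at least $\lceil \beta/2 \rceil$ sites with $E_y$ holding; the possibility that $\diam(C)$ itself exceeds any given polynomial in $\beta$ is handled separately via the trivial propagation bound $\diam(C) \leq 2\tau_\beta$ combined with a refined combinatorial count. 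A union bound over the choice of corner of $B$ (at most $|\Lambda_\beta| = \exp(\beta dL)$) and over subsets of $B$ of size $\lceil\beta/2\rceil$ (at most $\exp(O(\beta\log\beta))$ by Stirling) then yields a total probability at most $\exp(\beta dL + O(\beta\log\beta) - \beta^2\eta/2)$, which is super-exponentially small.

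The main technical obstacle is the diameter bound $\diam(C) \leq 2k+s$: one must carefully control the contribution of merging events (bounding the number of mergers by $k-1$) and verify the key geometric fact about $\ell_\infty$ bounding boxes. A second, less serious, subtlety is the control of very large clusters, which requires complementing the combinatorial argument with the speed-of-propagation bound.
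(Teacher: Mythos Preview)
Your diameter--tracking inequality $\diam(C)\le 2k+s$ is correct and elegant: the bounding--box observation for Type-$\ge 2$ events within a single cluster and the bound ``at most $k-1$ mergers'' are both valid, and together they give exactly the combinatorial control you claim. Your events $E_y$ coincide with the paper's Bernoulli configuration $\alpha$, and the inequality $\P(E_y)\le c_\beta(1)\tau_\beta$ is the right per--site input.

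The genuine gap is the localization step that precedes your union bound. You assert that the $\lceil\beta/2\rceil$ slow sites lie in a box of polynomial sidelength and defer the case $\diam(C)>\mathrm{poly}(\beta)$ to ``the trivial propagation bound $\diam(C)\le 2\tau_\beta$ combined with a refined combinatorial count''. But $2\tau_\beta$ is exponential in $\beta$, and for a cluster of diameter $D$ your bound gives only $k+s\ge D/2$ slow sites somewhere in a box of volume $D^d$; the subset count $\binom{D^d}{D/2}\sim D^{(d-1)D/2}$ overwhelms $p_\beta^{D/2}=\exp(-\beta\eta D/2)$ as soon as $D\gtrsim\exp(\beta\eta/(d-1))$. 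No ``refined count'' of subsets or lattice animals rescues this without an additional idea.

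The missing idea is a localization lemma. One clean fix within your framework: look at the \emph{first} time $t^*$ at which some cluster reaches diameter $\ge\beta$. Just before $t^*$ every cluster has diameter $<\beta$; a single activation can increase the diameter by at most $1$ (Type-$1$) or merge clusters into one of diameter at most $\max_{i\ne j}(\diam C_i+\diam C_j)+2<2\beta+2$. Hence the offending cluster $C^*$ satisfies $\beta\le\diam(C^*)<2\beta+2$, so your inequality gives $\ge\beta/2$ slow sites inside a box of side $2\beta+2$, and now the union bound $|\Lambda_\beta|\binom{(2\beta+2)^d}{\lceil\beta/2\rceil}p_\beta^{\beta/2}$ is genuinely SES. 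This ``first time'' argument is the dynamic counterpart of the Aizenman--Lebowitz lemma, and it is precisely what your write--up lacks.

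For comparison, the paper takes a static route: it observes directly that $\sigma_{\Lambda_\beta,\tau_\beta}\le\boot(\alpha)$ (any site occupied with $\le 1$ neighbour lies in $\alpha$; the rest are filled by bootstrap), invokes Aizenman--Lebowitz to produce an internally spanned box of sidelength in $[\beta,2\beta+1]$, and then bounds the crossing probability of that box by a strip--vacancy argument. Your dynamic diameter bound is an interesting alternative to the domination $\sigma\le\boot(\alpha)$, but both approaches ultimately rest on the same localization step, which you labelled ``less serious'' but did not actually supply.
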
 
The key result for the inductive proof
is the following control on the size of the clusters
in the configuration.
We set
$$L_d\,=\,
\frac{\Gamma_d-\kappa_{d}}{d}
\,.$$
\bt{T2}
Let $L>0$ and let $\L_\beta=\L(\exp(\beta L))$ be a cubic
box of sidelength $\exp(\beta L)$.
Let $\kappa<\kappa_d$
and let $\tau_\beta=\exp(\beta \k)$.
The probability that an occupied cluster in
	  $\s_{\L_\beta;\tau_\beta}$
has diameter larger than $\exp({\b L_d})$ is super--exponentially
small in $\beta$.
\et

%
%

By using theorem~\ref{T2} inductively, we are able to show that the
asymptotic speed of the droplets inside the box
$\Lambda_\beta$ is of order
$\exp(-{\b \kappa_{d-1}})$.
The proofs of proposition~\ref{inigro} 
and of theorem~\ref{T2} involve both a bootstrap argument to
control the coalescence of the droplets. In fact, one could make a
general statement to control the maximal size of an occupied cluster at
a given time. Yet it turns out that only the initial growth and
the asymptotic speed of the droplets 
are relevant to compute the relaxation time,
the intermediate stage of growth of the droplets is not a limiting factor.

\section{Graphical construction}
Throughout the paper, we use the standard graphical construction
\cite{DS2}.
All our processes are defined on the same probability space and
they are built with the help of the arrival times of independent
Poisson processes
and the associated uniform random variables
$$\t(x,i)\,,\quad 
U(x,i)\,,\quad i\geq 1\,,\quad x\in\Z^d\,.$$
This provides a natural coupling between the
different growth processes.
The process in a set $\Lambda$ with boundary conditions $\rho$
is denoted by
$$(\sigma_{\L,t}^\rho)_{t\geq 0}\,.$$
This coupling preserves the natural order 
on the configurations. A configuration $\alpha$
is included in 
a configuration $\rho$, which we denote by
$\alpha\leq\rho$, if every site occupied in $\alpha$
is also occupied in $\rho$.
The growth process in a box $\Lambda$
starting from the configuration $\alpha$ will always
remain smaller
than
the growth process in $\Lambda$
starting from a larger configuration $\rho$.
The growth processes in a box $\Lambda$ associated to
different boundary conditions are also coupled in the same
way, and the coupling respects the order on the boundary conditions,
meaning that larger boundary conditions lead to larger growth processes.
We rely repeatedly on this coupling in order to compare our model
with simpler or lower-dimensional processes. 


\section{Bootstrap}

Following \cite{DS2}, we control the effect of the coalescence 
of the droplets with a bootstrap-percolation argument.
  We recall next the standard bootstrap procedure.
Let $A$ be a finite subset of $\Z^d$.
We start with a configuration $\h \in \sgraffa{0,1}^{A}$
and we occupy iteratively all the sites which have at least
two occupied neighbors, until exhaustion.
  Since the procedure is monotonic and the volume is finite,
  the algorithm will stop after a finite number of steps. 
  We denote by $\boot(\h)$ 
the final configuration obtained by bootstraping $\h$.
This final configuration is an union of occupied parallelepipeds,
which are pairwise at distance larger than or equal to two. 
Following \cite{AL}, we say that a set $E\subset{\mathbb Z}^d$ is
internally spanned in the configuration~$\eta$
if it is entirely covered in the 
final configuration of the dynamics restricted to $E$. More
precisely, the initial configuration is the restriction
of $\eta$ to $E$ and 
the dynamics runs on the sites of $E$
without taking into account sites outside $E$.

We will use the supremum norm, given by
$$\forall x=(x_1,\dots,x_d)\in\Z^d\qquad
|x|_\infty\,=\,
\max_{1\leq i\leq d} |x_i|\,.$$
We denote by
$\dinf$ the distance associated to the supremum norm
and we define the 
$\dinf$ diameter
$\diam C$
of a subset $C$ of ${\mathbb Z}^d$ by
$$
\diam C\,=\,\sup\,\big\{\,|x-y|_\infty:\; x,y\in C
\,\big\}\,.$$
Thus $\diam C$ is the sidelength of the minimal cube
surrounding~$C$. 
The following lemma 
is a key observation 
of Aizenman and Lebowitz
\cite{AL}. 
\bl{AL}
  If a set $C$ is internally spanned in a configuration~$\eta$
then
  for all integer $k\geq 1$ such that
$2k+1 <\diam C$ there exists 
a subset $D$ of $C$ which is internally spanned
in $\eta$ and such that
  $k\leq  \diam D\le 2k + 1$.
\el
We give the 
sketch of the proof, which can be found in \cite{AL}. 
It relies on the fact that if $\h \le \xi \le 
\boot(\h)$, then $\boot(\xi)=\boot(\h)$.
For this reason, we are free to change the updating 
order without affecting the final configuration.
The idea is then to realize the bootstrap percolation 
by occupying a single site at each  step.
If the maximal diameter of
the clusters present in the configuration is $k$ 
before one step of the algorithm, then
right after occupying one site, the new maximal 
diameter is between $k$ and $2k + 1$.
Looking at the evolution of the maximal diameter 
of the occupied clusters, we get
the thesis.

\section{Proof of proposition \ref{inigro}}
Let $L>0$ and let $\L_\beta=\L(\exp(\beta L))$ be a cubic
box of sidelength $\exp(\beta L)$.
Let $\kappa<\Gamma_{d-1}$
and let $\tau_\beta=\exp(\beta \k)$.
Let $\alpha$ be the random configuration defined as follows.
For $x\in \L_\beta$, we set
$\alpha(x)=1$ if there exists $i\geq 1$ such that
$\tau(x,i)\leq\tau_\beta$ and
$U(x,i)\leq c_\beta(1)$, otherwise we set
$\alpha(x)=0$.
The law of the configuration $\alpha$ is the Bernoulli product law
with parameter
$p_\beta$ given by
$$
p_\beta\,=
1-\exp\big(-c_\beta(1)\tau_\beta\big)
\,.
$$
Taking logarithm,
we see that
$$\lim_{\beta\to\infty}\,\,
\frac{1}{\beta}\ln
p_\beta\,=\,-\Gamma_{d-1}+\kappa\,<0.$$
Let $\boot(\alpha)$ be the configuration obtained by bootstraping $\alpha$.
The configuration 
	  $\s_{\L_\beta;\tau_\beta}$
	  is smaller than or equal to $\boot(\alpha)$.
	  Indeed, in order to grow beyond
	  $\boot(\alpha)$, the process would have to occupy
	  a site outside 
	  $\boot(\alpha)$ having $0$ or $1$ occupied neighbors, but
	  all these events until time $\tau_\beta$ were already recorded
	  in the initial configuration $\alpha$.
Proposition \ref{inigro}
	  is therefore implied by
	 the following lemma.
\begin{lemma}
  \label{booini}
  The probability that
there exists an occupied cluster
    in the configuration
$\boot(\alpha)$ 
    whose
$\dinf$ diameter
is larger than $\b$
is super--exponentially
small in $\beta$.
\end{lemma}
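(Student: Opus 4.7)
\emph{Proof plan.}
I would combine the Aizenman--Lebowitz localisation lemma~\ref{AL} with a bootstrap--specific seed--counting bound, closed off by a union bound over positions of bounding boxes. Note first that the hypothesis $\kappa<\Gamma_{d-1}$ gives $p_\beta\leq\exp(-c\beta)$ for $\beta$ large enough, with $c=(\Gamma_{d-1}-\kappa)/2>0$.

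Suppose some occupied cluster $C$ in $\boot(\alpha)$ has $\diam C>\beta$. Because $C$ is a connected component of $\boot(\alpha)$, every site of $C\setminus\alpha$ was added at a step when all its occupied neighbors lay inside $C$; hence $C$ is internally spanned by $\alpha\cap C$ in the dynamics restricted to $C$. Applying lemma~\ref{AL} with $k=\lceil\beta/3\rceil$ produces a connected, internally spanned subset $D\subset C$ with $k\leq\diam D\leq 2k+1$, so that $D$ is contained in some $\dinf$--cube of sidelength at most $\beta$ for $\beta$ large.

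The crux is the following combinatorial claim: any connected, internally spanned set $D\subset\Z^d$ under threshold--$2$ bootstrap with $\diam D=\ell$ contains at least $\ell/2+1$ initial seeds. I would prove it by projecting onto a coordinate axis. Pick a direction $e_1$ realising the diameter and perform the bootstrap on $\alpha\cap D$ one site at a time. If a newly added site $x$ has first coordinate $\pi_1(x)$ not already present among the first coordinates of previously occupied sites, then its only candidate occupied neighbors are $x\pm e_1$, because for $j\neq 1$ the neighbor $x\pm e_j$ shares the (by assumption absent) first coordinate $\pi_1(x)$. The threshold--$2$ condition forces both $x-e_1$ and $x+e_1$ to be previously occupied, so the projected set of first coordinates evolves under the one--dimensional threshold--$2$ rule, whose closure of any initial set $T\subset\Z$ has cardinality at most $2|T|-1$. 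Therefore $|\pi_1(D)|\leq 2|\pi_1(\alpha\cap D)|-1\leq 2|\alpha\cap D|-1$. Since $D$ is connected in $\Z^d$, the projection $\pi_1(D)$ is a full integer interval of length $\ell+1$, yielding $|\alpha\cap D|\geq(\ell+2)/2\geq\beta/6+1$.

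A union bound finishes the job. The number of cubes of $\dinf$--sidelength at most $\beta$ with lower corner in a small enlargement of $\Lambda_\beta$ is at most $|\Lambda_\beta|\cdot\beta^d=\exp(\beta Ld+d\ln\beta)$. For each such cube $B$ of volume at most $\beta^d$, a standard binomial/Stirling estimate gives
$$\P\bigl(|\alpha\cap B|\geq\beta/6\bigr)\leq\binom{\beta^d}{\lceil\beta/6\rceil}p_\beta^{\beta/6}\leq\exp\bigl(-c\beta^2/6+O(\beta\ln\beta)\bigr).$$
The total is thus $\exp(-c\beta^2/6+O(\beta\ln\beta))$, which is super--exponentially small in $\beta$. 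The main obstacle is the combinatorial claim in the previous paragraph: in $d=2$ one can invoke the non--increasing perimeter invariant, but in $d\geq 3$ the perimeter can actually grow under threshold--$2$ bootstrap, so a direct perimeter argument fails; the projection argument above is the dimension--independent substitute and is really the crux of the proof.
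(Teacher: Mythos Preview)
Your proof is correct, and the overall architecture (Aizenman--Lebowitz localisation followed by a union bound over bounding cubes) matches the paper's. The core combinatorial step is organised differently, though.

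The paper argues geometrically: if a horizontal double layer inside the bounding cube $Q_\beta$ is seedless, it stays void under threshold--$2$ bootstrap (each of its sites has at most one neighbour outside the double layer), so a vertical crossing forces every one of the $\sim\beta/2$ disjoint double layers to be non-void. These events are independent, each has probability at most $1-(1-p_\beta)^{2(2\beta+1)^{d-1}}$, which is exponentially small, and the $(\beta/2-1)$-th power is SES and absorbs the exponential union bound over positions.

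Your route replaces this direct independence calculation by a deterministic seed-count lemma (via projection onto the diameter-realising axis) followed by a binomial tail. The projection argument encodes the same obstruction---a seedless double layer blocks, equivalently the $1$-dimensional threshold--$2$ closure of the projected seed set can at most double in size---but packages it as the reusable statement $|\alpha\cap D|\geq(\diam D+2)/2$. This buys you a clean combinatorial lemma that could be cited independently; the paper's version is marginally shorter because it feeds the independence of the disjoint strips straight into the probability estimate and avoids the binomial-coefficient bookkeeping. One small point: you use that the set $D$ produced by lemma~\ref{AL} is connected, which the lemma as stated does not say explicitly; this is indeed how the Aizenman--Lebowitz construction works (the $D$ is an occupied cluster at some intermediate step of the one-site-at-a-time bootstrap), and the paper's own applications of the lemma rely on it as well.
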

\begin{proof}
We say that a box is crossed if, after applying 
the bootstrap operator restricted to the box, there is an 
occupied connected set joining two opposite faces of the box.
By lemma \ref{AL}, 
if there is an occupied cluster
    in $\boot(\alpha)$ whose
$\dinf$ diameter
is larger than $\b$, then
there exists an
  internally--spanned cluster in $\boot(\alpha)$ with diameter between 
$\b$ and
$2\b+1$.
  Let $Q_\beta$ be a cube of minimal side length containing such a cluster.
The cube 
  $Q_\beta$ has to be crossed in one of the $d$ directions parallel to the axis,
say for instance the vertical one.
If there is an horizontal strip in $Q_\beta$ of height 
$2$ which is void in the configuration $\alpha$
then the box
$Q_\beta$ cannot be crossed vertically.
Thus
$$\displaylines{
\P \big(\text{$Q_\beta$ is crossed vertically}
\big)
\hfill\cr
\,\leq\,
\P
\left(
\begin{matrix}
\text{each horizontal strip in $Q_\beta$ of height 
$2$ is}
\\
\text{
non void in the initial configuration $\alpha$ 
}
\end{matrix}
\right)
\cr
\,\leq\,
\P
\left(
\begin{matrix}
\text{one fixed horizontal strip in $Q_\beta$ of height 
$2$
}\\
\text{
is non void in the initial configuration $\alpha$ 
}
\end{matrix}
\right)^{\beta/2-1}
\cr
\,\leq\,
\big(1-(1-p_\beta)^{2(2\beta+1)^{d-1}}\big)^{\beta/2-1}\,.
}$$
To complete the estimate, we count the number of possible choices 
for the box $Q_\beta$:
$$\displaylines{
\P
\left(
\begin{matrix}
\text{there is an occupied cluster
    in
$\boot(\alpha)$ 
}\\
\text{whose
$\dinf$ diameter
is larger than $\b$}
\end{matrix}
\right)\hfill\cr
\,\leq\,
|\L_\beta| \times 3\beta
\times d\,
 \P \big(\text{$Q_\beta$ is crossed vertically}
\big)
\cr
\,\leq\,
3d\beta\exp(\beta dL)
\big(1-(1-p_\beta)^{2(2\beta+1)^{d-1}}\big)^{\beta/2-1}
}$$
and this last bound is SES.
\end{proof}

\section{Proof of theorem \ref{T2}}

  In this section we prove
  theorem \ref{T2}
with the help of an induction over the dimension~$d$.
  The main point here is the bound on the asymptotic speed of growth of 
  a droplet. 
  Our approach gives a bound on the
  probability of a ``too fast" growth. 
  Since this bound is super-exponential, while both the volume and
  the time we are considering are exponential,
  we end up with a deterministic computation rather than a 
  large-deviation estimate as in \cite{DS2}.
  This fact allows to avoid all combinatorial 
  problems like counting the number of ``chronological paths" and
  it is the main technical difference 
  with the method used in \cite{DS2}.
  Heuristically, the process evolves as if
the droplets were growing
  one shell after the other,
  filling the sites on one face before passing to the next.
  Since all the sites on a face are neighbors of an
  occupied site in the droplet, this growth mechanism is 
  analogous to a nucleation and growth mechanism in dimension $d-1$.
  We use the 
  $d-1$ dimensional bound on the size of the clusters
  to show that, up to SES events,
 a too--fast growth has to take place 
  into a parallelepiped with ``small" base. 
This is a SES bound, and the result holds in
  any exponential volume.
  Throughout the section, we
let 
$$\L_\beta=\L(\exp(\beta L))$$ 
be a cubic
box of sidelength $\exp(\beta L)$, where $L>0$.
Let $\kappa<\kappa_{d}$
and let $\tau_\beta=\exp(\beta \k)$.
Coalescence is a nontrivial effect only if $L \ge L_d$,
since otherwise the number of droplets formed 
in $\Lambda_\beta$ before time $\tau_\beta$ is finite.
Theorem \ref{T2} needs to be proved only for
$L \ge L_d$.

\subsection{Dilation, bootstrap and erosion}
The procedure we are going to define is a modified 
version of standard bootstrap percolation and
is specifically suited to our setting.
The same results can be obtained by rescaling the lattice as in \cite{DS2}
and using the standard bootstrap percolation arguments
developed in \cite{AL,CM}.
\def\dilate{\text{dilate}}
\def\erode{\text{erode}}
We denote by $\dinf$ the distance associated to the supremum norm, 
given by
$$\forall x,y\in\Z^d\qquad
\dinf(x,y)\,=\,|x-y|_\infty\,=\,
\max_{1\leq i\leq d} |x_i-y_i|\,.$$
Let $\Lambda$ be a subset of $\Z^d$,
let
$\eta$ 
be a configuration in
$\sgraffa{0,1}^{\Lambda}$
and let $l\geq 0$. 
We define the dilated configuration
$\dilate(\eta,l)$ by occupying all the sites of $\L$ which are
at a $\dinf$ distance strictly less than~$l$ from a site occupied in~$\eta$:
$$
\forall x\in
    \L\quad
\dilate(\eta,l)(x)
\,=\, 
\begin{cases}
1\quad\text{ if }\quad
\exists\,y\in\L\quad \dinf(x,y)<l\,,\quad\eta(y)=1\\
0\quad\text{ otherwise }
\end{cases}
$$
We define the eroded configuration
$\erode(\eta,l)$ by emptying all the sites of $\L$ which are
at a $\dinf$ distance strictly less than~$l$ from an empty site in~$\eta$:
$$
\forall x\in
    \L\quad
\erode(\eta,l)(x)
\,=\, 
\begin{cases}
0\quad\text{ if }\quad
\exists\,y\in\L\quad \dinf(x,y)<l\,,\quad\eta(y)=0\\
1\quad\text{ otherwise }
\end{cases}
$$
Dilation and erosion are classical operations in 
mathematical morphology.

Let $\eta$ be the random configuration defined as follows.
For $x\in \L_\beta$, we set
$\eta(x)=1$ if there exists $i\geq 1$
such that
$\tau(x,i)\leq\tau_\beta$ and
$U(x,i)\leq c_\beta(0)$, otherwise
we set
$\eta(x)=0$.
The law of the configuration $\eta$ is the Bernoulli product law
with parameter
$p_\beta$ given by
$$
p_\beta\,=
1-\exp\big(-c_\beta(0)\tau_\beta\big)
\,.
$$
Taking logarithm, we see that
$$\lim_{\beta\to\infty}\,\,
\frac{1}{\beta}\ln
p_\beta\,=\,-\Gamma_{d}+\kappa\,.$$
Let $\boot(\eta)$ be the configuration obtained by bootstraping $\eta$.
\begin{proposition} 
\label{tb1}
Let $\rho$ be the configuration
obtained by dilating $\eta$ with a distance 
${\beta}^{-1}\exp(\beta L_d)$
and then bootstraping it:
$$\rho\,=\,\boot(\dilate(\eta,{\beta}^{-1}\exp(\beta L_d)))\,.$$
The probability that there is an occupied cluster
    in $\rho$ whose
$\dinf$ diameter
is larger than $\exp{(\b L_{d})}$
is super--exponentially
small in $\beta$.
\end{proposition}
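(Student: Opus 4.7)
The plan is to adapt the strip-crossing argument of Lemma~\ref{booini} to the dilated-and-bootstraped setting. Set $r_\beta=\beta^{-1}\exp(\beta L_d)$ and $N_\beta=\exp(\beta L_d)$. The essential modification is that, because each $\eta$-seed has been thickened by the dilation into a cube of side $2r_\beta-1$, the void slabs used to block a bootstrap crossing must now be of vertical thickness of order $r_\beta$ rather than of height~$2$.

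First I would apply Lemma~\ref{AL} to the standard $2$-neighbor bootstrap started from $\eta':=\dilate(\eta,r_\beta)$. If $\rho=\boot(\eta')$ contains an occupied cluster of $\dinf$ diameter greater than $N_\beta$, then choosing $k_\beta=\lfloor N_\beta/3\rfloor$ yields a subset $D$ internally spanned in $\eta'$ with $k_\beta\le\diam D\le 2k_\beta+1$. Let $Q_\beta$ be the minimal cube containing $D$ and let $Q_\beta^+$ denote its $r_\beta$-enlargement. Since $\diam D\ge k_\beta$, the set $D$ has extent at least $k_\beta$ in some coordinate direction, say the vertical one. The key blocking observation is: if $Q_\beta^+$ contains a horizontal slab $S$ of vertical thickness $2r_\beta$ crossing the vertical extent of $D$ and in which no $\eta$-seed lies, then $D$ cannot be internally spanned. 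Indeed, any seed capable of dilating onto a site in the two central rows of $S$ must lie at $\dinf$ distance strictly less than $r_\beta$ from it, hence inside $S$; since $S$ is seed-free, those two central rows are void in $\eta'$, and a void band of thickness~$2$ cannot be filled by the restricted $2$-neighbor bootstrap on $D$, since each site of the band has at most one occupied neighbor.

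It then remains to estimate the probability that every slab of vertical thickness $2r_\beta$ inside the vertical extent of $D$ contains at least one $\eta$-seed. That extent admits at least $\lfloor k_\beta/(2r_\beta)\rfloor\ge \beta/6+O(1)$ disjoint such slabs; each has cardinality at most $2r_\beta(2k_\beta+1+2r_\beta)^{d-1}\le C\beta^{-1}\exp(\beta d L_d)$. Using $d L_d=\Gamma_d-\kappa_d$ and $p_\beta=\exp(\beta(\kappa-\Gamma_d)+o(\beta))$, the probability that a fixed slab meets a seed is at most $C\beta^{-1}\exp(\beta(\kappa-\kappa_d))$. By the independence of $\eta$ on disjoint regions, the probability that every slab is non-void is bounded by
$$\bigl(C\beta^{-1}\exp(\beta(\kappa-\kappa_d))\bigr)^{\beta/6},$$
which is super-exponentially small in $\beta$ since $\kappa<\kappa_d$. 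A final union bound over the $d$ possible crossing directions and the at most $d(k_\beta+1)|\Lambda_\beta|$ choices of the cube $Q_\beta$ multiplies by only an exponential-in-$\beta$ factor and preserves the SES estimate.

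The main obstacle is the careful verification of the blocking observation. One has to track precisely which $\eta$-seeds can contribute mass to the central rows of $S$ after dilation, confirm that the threshold thickness $h=2r_\beta$ is indeed sufficient to produce a void band of thickness at least~$2$ surviving dilation from above and below, and check compatibility with the restricted-to-$D$ bootstrap used in the definition of internally spanned, in particular that seeds lying outside $Q_\beta^+$ cannot influence $\eta'$ inside $Q_\beta$ and hence cannot save the blocked crossing.
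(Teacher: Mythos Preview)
Your proposal is correct and follows essentially the same approach as the paper's own proof: apply the Aizenman--Lebowitz lemma to locate an internally spanned cluster at scale $\exp(\beta L_d)$, then use a slab-blocking argument (with slabs of thickness of order $r_\beta=\beta^{-1}\exp(\beta L_d)$ in an enlarged box so as to catch all $\eta$-seeds whose dilates reach $Q_\beta$), obtain roughly $\beta$ independent slabs each of volume $O(\beta^{-1}\exp(\beta dL_d))$, and conclude via $dL_d=\Gamma_d-\kappa_d$ and $\kappa<\kappa_d$. The only cosmetic differences are constants: the paper takes the cluster at scale between $\exp(\beta L_d)$ and $2\exp(\beta L_d)+1$, uses slabs of height $3r_\beta$, and enlarges $Q_\beta$ by a factor $3$ in the transverse directions rather than by $r_\beta$; your tighter choices work equally well, and the blocking observation you flag as the main obstacle is indeed valid for the reason you give.
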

\bpr
  By lemma \ref{AL}, 
if there is an occupied cluster
    in $\rho$ whose
$\dinf$ diameter
is larger than $\exp{(\b L_{d})}$, then
there exists an
  internally--spanned cluster in $\rho$ with diameter between 
$\exp{(\b L_{d})}$ and
$2\exp{(\b L_{d})}+1$.
  Let $Q_\beta$ be a cube of minimal side length containing such a cluster.
The cube 
  $Q_\beta$ has to be crossed in one of the $d$ directions parallel to the axis,
say for instance the vertical one.
Let 
$Q'_\beta$ be the parallelepiped having the same center and the same
height as $Q_\beta$ and whose
sidelengths in the other directions are three times the 
sidelength of $Q_\beta$.
If there is an horizontal strip in $Q'_\beta$ of height
$3{\beta}^{-1}
\exp{(\b L_{d})}$ which is void in the initial configuration $\eta$, 
then there is an horizontal strip in $Q_\beta$ of height 
$2$ which is void in the intermediate configuration 
$$\dilate(\eta,{\beta}^{-1}\exp(\beta L_d))$$
and the box
$Q_\beta$ cannot be crossed vertically after the bootstraping.
Thus
$$\displaylines{
\P \big(\text{$Q_\beta$ is crossed vertically in $\rho$}
\big)
\,\leq\,
\hfill\cr
\P 
\left(
\begin{matrix}
\text{each horizontal strip in $Q'_\beta$ of height 
$3\beta^{-1}\exp{\left( \b L_{d} \right)}$ 
}\\
\text{
is  non void in the initial configuration $\eta$ 
}
\end{matrix}
\right)
\cr
\,\leq\,
\P 
\left(
\begin{matrix}
\text{one fixed horizontal strip in $Q'_\beta$ of height 
$3\beta^{-1}\exp{\left( \b L_{d} \right)}$ 
}\\
\text{
is non void in the initial configuration $\eta$ 
}
\end{matrix}
\right)^{\beta/3}
\cr
\,\leq\,
\left(
1-
\exp\Big(
9^{d-1}\exp((d-1)\b L_d)\times 3\beta^{-1}\exp{\left( \b L_{d} \right)} 
\times \ln(1-p_\beta)
\Big)
\right)^{\beta/3}
\cr
\,\leq\,
\left(
-9^d\beta^{-1}{
\exp(d\b L_d)
\times \ln(1-p_\beta)
}
\right)^{\beta/3}
\,
}$$
and this last bound is SES because $dL_d+\Gamma_d=\kappa_d$.
To complete the estimate, we count the number of possible choices 
for the box $Q_\beta$:
$$\displaylines{
\P
\left(
\begin{matrix}
\text{there is an occupied cluster
    in $\rho$ whose
}\\
\text{$\dinf$ diameter
is larger than $\exp{\b L_{d}}$}
\end{matrix}
\right)\hfill\cr
\,\leq\,
|\L_\beta| \times 3\exp(\beta L_d)
\times d\,
\P \big(\text{$Q_\beta$ is crossed vertically in $\rho$}
\big)
}$$
and the last term is SES.
\epr

Let $\xi$ be the erosion
of
$\rho$ with a distance
${(2\beta)}^{-1}\exp(\beta L_d)$, i.e.,
\be{defxi}\xi\,=\,\erode(\rho,{(2\beta)}^{-1}\exp(\beta L_d))\,.\ee
Since $\rho$ was obtained after applying the bootstrap procedure,
it is a union 
of occupied parallelepipeds,
which are pairwise at distance larger than two.
After applying the erosion operator, we obtain again
an union 
of occupied parallelepipeds,
which are pairwise at distance larger than or equal to 
${(2\beta)}^{-1}\exp(\beta L_d)$.
Moreover we dilated
$\eta$ with a distance ${\beta}^{-1}\exp(\beta L_d)$ before the bootstrap,
thus the configuration $\eta$ is still included in $\xi$, so that
all the sites where nucleation has occurred before time $\tau_\beta$
are occupied in the configuration $\eta$. By attractivity of the process,
we have
$$\s_{\L_\beta,\tau_\beta}\,\leq\,
\s_{\L_\beta,\tau_\beta}^\eta\,\leq\,
\s_{\L_\beta,\tau_\beta}^\xi$$
and because of the definition of $\eta$, no nucleation occurs in the
growth process starting from $\eta$ until the time $\tau_\beta$.
We are thus able to compare 
$\s_{\L_\beta,\tau_\beta}$ with a process where nucleation events
are cancelled, that we define in the next section. 
The crucial problem is then to control the speed of growth of the droplets
and to show that, up to a SES event, the non--nucleating
process starting from $\xi$ is still included in $\rho$ at time
$\tau_\beta$.

\subsection{Control of the speed of growth}
In this section, we study the growth process where the nucleation is
cancelled and we prove
our key estimate to control the speed of growth of the droplets.
The initial speed of growth of a nucleus is
$\exp(-\beta \Gamma_{d-1})$. For a droplet of size
$\exp(\beta K)$, the speed is
$$\exp\big(\beta ((d-1)K-\Gamma_{d-1})\big)$$ 
for 
$K< L_{d-1}$ and
$\exp(-\beta \kappa_{d-1})$
for $K\geq L_{d-1}$.
It turns out that the time needed to create a droplet travelling
at the asymptotic speed is
$\exp(\beta \Gamma_{d-1})$, which is of the same order as the time
needed to grow the initial nucleus into a droplet of diameter~$\beta$.
Hence we need only to control the speed of droplets having
a diameter larger than
$\exp(\beta L_{d-1})$, which travel at the asymptotic speed.

\noindent
{\bf Non--nucleating processes.}
We define a \emph{non-nucleating} process 
$$(\st_{\L_\beta,t})_{t\geq 0}$$
associated to the rates
$$\widetilde{c}(0)\,=\,0\,,\qquad
\widetilde{c}(n)\,=\, c(n)\,,\quad 1\leq n\leq 2d\,.$$
In this process, a site cannot become occupied unless one of its neighbors 
is occupied.
The activation energies for this process are given by
$$\widetilde{\Ga}(d)\,=\,\infty\,,\qquad
\widetilde{\Ga}(n)\,=\, \Ga(n)\,,\quad 0\leq n<d\,.$$
In the sequel, 
the various processes where nucleation is suppressed are denoted with 
a tilde above the symbol of the process.

\noindent
{\bf Floor boundary conditions.}
Let $R$ be a cylinder 
with basis a $d-1$ dimensional cubic box $\L^{d-1}$ and height $H$, 
i.e., of the form
$$R\,=\,\L^{d-1}\times\{\,0,\dots,H\,\}\,.$$
    We call {\emph{floor}} of $R$ 
its bottom face
$\L^{d-1}\times\{\,0\,\}$
and {\emph{ceiling}} of $R$ its top face
$\L^{d-1}\times\{\,H\,\}$.
We call floor boundary conditions on~$R$ the boundary condition 
defined by the following configuration
$\rho$:
\begin{equation*}
\forall x\in\Z^d\qquad
\rho(x)\,=\,
\begin{cases}
1\text{ if }x\in
\L^{d-1}\times\{\,-1\,\}\\
0\text{ otherwise}\\
\end{cases}
\end{equation*}
The process $(\st^{\rho}_{R;t})_{t\geq 0}$ in $R$ with the 
floor boundary conditions is denoted by
    $$(\floo_{R;t})_{t\geq 0}\,.$$
    We say that a configuration {\emph{crosses}} $R$
    if it contains a cluster included in $R$ which
    connects the floor and the ceiling.
  \bp{lk}
    Let $d\ge 2$ and let $K>0$. 
Let $R_\beta$ be the cylinder
    $$R_\beta\,=\,\L^{d-1}(\exp{(\b K)}) \times \{\,0,\dots,\beta\,\}\,.$$
Let
$\kappa<\kappa_{d-1}$ and $\tau_\beta=\exp(\b\kappa)$.
 Suppose that theorem~\ref{T2}
has been proved in dimension $d-1$.
Then the probability that
      $\floo_{R_\beta;\tau_\beta} 
        \text{ crosses } R_\beta$
	is SES.
  \ep

  \bpr 
    We start with the case $K > L_{d-1}$ and we set  
    $$\L_\beta^{d-1}\,=\,\L^{d-1}(
\exp(\beta L_{d-1})
    )\,.$$
    We use theorem \ref{T2} to show that,
    most likely, the cluster that crosses $R_\beta$ 
    is contained in a smaller parallelepiped 
of basis
    $\L_\beta^{d-1}$, i.e., a parallelepiped which is a translate of
    $$T_\beta\,=\,\L^{d-1}_\b \times \{\,0,\dots,\beta\,\}\,.$$
    To this end, let us consider the process 
    obtained from $(\floo_{R;t})_{t\geq 0}$ by occupying all the sites
 in each non empty column, and its projection 
$(\widehat\sigma_t)_{t\geq 0}$ on the floor 
    $R_\beta$ defined
    for $t\geq0$ by
$$
\forall\widehat x\in
    \L_\beta^{d-1}\quad
\widehat\sigma_t(\widehat x)\,=\, 
\begin{cases}
0\quad\text{ if }
    \quad\floo_{R_\beta;t}
(\widehat x,i)\,=\, 0\text{ for all }
    i\in\{\,0,\dots,\beta\,\}\\
1\quad\text{ if }
    \quad\floo_{R_\beta;t}
(\widehat x,i)\,=\, 1\text{ for some }
    i\in\{\,0,\dots,\beta\,\}\\
\end{cases}
$$
The process
$(\widehat\sigma_t)_{t\geq 0}$ is a $(d-1)$-dimensional process with rates
satisfying  
$$
c_\beta(n+1)\,\leq\,
\widehat{c}_\beta(n)\,\leq\,2c_\beta(n+1)+(\beta-2)c_\beta(n)\,,\quad
0\leq n\leq d-1\,.
$$
In terms of activation energies,
$$
\widehat{\Ga}(n)\,=\, \Ga(n)\,,\quad 0\leq n\leq d-1\,.$$
The idea is to use the $(d-1)$-dimensional bounds on the size of 
$(\widehat\sigma_t)_{t\geq 0}$ and 
attractivity to bound the size of the clusters 
of $(\floo_t)_{t\geq 0}$.
Let $\text{Large}$ be the event
$$
\text{Large}\,=\,
\left\{
\begin{matrix}
\text{there is an occupied cluster
    in $\floo_{R_\beta;\tau_\beta}$ 
}\\
    \phantom{\floo_{R_\beta}}\kern-7pt
\text{whose projection on the floor
     of $R_\beta$}\kern-7pt
    \phantom{\floo_{R_\beta}}\\
\text{has a diameter larger than $\exp{(\b L_{d-1})}$}
\end{matrix}
\right\}\,.$$
    By theorem \ref{T2} in dimension $d-1$, 
since $\kappa<\kappa_{d-1}$, 
the probability that an occupied cluster in
$\widehat\sigma_{\tau_\beta}$
has diameter larger than $\exp({\b L_{d-1}})$ is SES.
Since
    the volume of $R_\beta$ is exponential,
 the probability of the event
$\text{Large}$ is SES.
We write then
$$\displaylines{
      \P ( \floo_{R_\beta;\tau_\beta} \text{ crosses } R_\beta) 
      \,\le\, \P(\text{Large}) \,+\,
      \P \big( \{\,\floo_{R_\beta;\t_\beta} \text{ crosses } R_\beta\}\setminus
\{\, \text{Large}\,\}\big) 
	\cr
\leq\,SES\,+\,
\P\left(
\begin{matrix}
    \text{there is a translate $y+ T_\beta$ of $T_\beta$ included}\\
\text{in $R_\beta$ such that
      $\floo_{y+T_\beta;\tau_\beta} \text{ crosses } y+T_\beta$}\\ 
\end{matrix}
\right)\cr
\leq\,SES\,+\,
|R_\beta|\,\,
      \P ( \floo_{T_\beta;\tau_\beta} \text{ crosses } T_\beta)\,.
}$$
In the last step, we used the fact that the model is translation invariant.
We conclude by showing that
      $$\P ( \floo_{T_\beta;\tau_\beta} \text{ crosses } T_\beta)\,$$
is also $\ses$,
reducing ourselves to the case where
$K \le L_{d-1}$.
We shall couple the process $(\floo_{T_\beta;t})_{t\geq 0}$
with floor boundary conditions in $T_\beta$
with another simpler process.

\noindent
{\bf Sandwich boundary conditions.}
     We call {\emph{slice}} 
    a parallelepiped with height $2$
    and basis $\L_\beta^{d-1}$, which is a translate of
    $$\Sigma\,=\,\L_\beta^{d-1} \times \{\,0,1\,\}\,.$$
We call sandwich boundary conditions on~$\Sigma$ the boundary condition defined by the following configuration
$\rho$:
\begin{equation*}
\forall x\in\Z^d\qquad
\rho(x)\,=\,
\begin{cases}
1\text{ if }x\in
\L^{d-1}_\beta\times\{\,-1,2\,\}\\
0\text{ otherwise}\\
\end{cases}
\end{equation*}
We denote by $(\sandw_{\Sigma;t})_{t\geq 0}$ the process in $\Sigma$
evolving with the sandwich boundary conditions $\rho$.

\noindent
{\bf Multilayer process.}
    Let us partition the cylinder $T_\beta$ into translated slices as
    $$T_\beta\,=\,\bigcup_{i=0}^{\b /2} 
\Sigma_i\,,$$
where
$$\Sigma_i\,=\,
    \L_\beta^{d-1} \times \{\,2i,2i+1\,\}\,=\,
    \Sigma + (0,\ldots,0,2i)
\,.$$
    We define the multilayer process $\smash{(\widetilde\sigma^\equiv_{T_\beta;t})_{t\geq 0}}$ 
    using the same graphical construction as 
$(\st_{T_\beta;t})_{t\geq 0}$
    but we use sandwich boundary conditions
    in each slice.           
More precisely, we set 
$$\forall i\in\{\,1,\dots,\beta/2\,\}\quad
\forall x\in\Sigma_i\quad\forall t\geq 0\qquad
    \widetilde\s^\equiv_{T_\beta;t}(x)\,=\,
\sandw_{\Sigma_i;t}(x)\,.$$ 
A key point is that, once we put sandwich boundary conditions
around each slice, the processes in the slices become independent
of each other.
Thanks to the coupling, the process
    $\smash{(\widetilde\s^\equiv_{T_\beta;t})_{t\geq 0}}$
is always above the process $\smash{(\floo_{T_\beta;t})_{t\geq 0}}$.
    Therefore, if 
$\smash{\floo_{T_\beta;\tau_\beta}}$
crosses $T_\beta$,
so does
$\smash{\widetilde\s^\equiv_{T_\beta;\tau_\beta}}$ and
    at least a nucleus must appear 
    in each slice. Thus
$$\displaylines{
      \P \big( \floo_{T_\beta;\tau_\beta} \text{ crosses } T_\beta\big)
\,\leq\,
        \P \big( \widetilde\s^\equiv_{T_\beta;\tau_\beta} \text{ crosses } T_\beta
\big) \cr
      \le\, \P \big(
\sandw_{\Sigma_i;\t_\beta} \text{ is not void } 
\text{ for }1\leq i\leq\beta/2
\big)\,\cr
      \le \,\P \big(
\sandw_{\Sigma;\t_\beta} \text{ is not void } \big)^{\beta/2-1}
\,.
}$$
Yet
$$\displaylines{
      \P \big(
\sandw_{\Sigma;\t_\beta} \text{ is void } \big)
\,=\,
      \P 
\left(
\begin{matrix}
    \text{for any $x\in\Sigma$, there is no}\\
\text{nucleation at $x$ before $\tau_\beta$}\\
\end{matrix}
\right)\cr
\,=\,\P 
\left(
\begin{matrix}
    \text{there is no nucleation}\\
\text{at the origin before $\tau_\beta$}\\
\end{matrix}
\right)^{|\Sigma|}\cr
\,=\,\Big(\exp\Big(-
c_\beta(1)\,
\tau_\beta 
\Big)\Big)^{|\Sigma|}\cr
\,=\,\exp\Big(-
2\,|\L^{d-1}_\beta|\,
c_\beta(1)\,
\tau_\beta
\Big)\cr
\,=\,\exp-\Big(2\exp\big(\beta (d-1)K
+\ln c_\beta(1)
+\beta\kappa
\big)\Big)\,.
}$$
Since 
$K \le L_{d-1}$ and
$\kappa<\kappa_{d-1}$, we have
$$\lim_{\beta\to\infty}\,\,
\frac{1}{\beta}
\big(\beta (d-1)K
+\ln c_\beta(1)
+\beta\kappa
\big)\,=\,(d-1)K-\Ga_{d-1}+\kappa\,<\,0$$
and there exists a positive constant $\delta$
such that, for $\beta$ large enough,
$$\displaylines{
      \P \big(
\sandw_{\Sigma;\t_\beta} \text{ is void } \big)
\,\geq\,
\exp-\Big(2\exp(-\beta\delta)\Big)\,.
}$$
Reporting in the previous inequality, we get
$$\displaylines{
      \P \big( \sandw_{T_\beta;\tau_\beta} \text{ crosses } T_\beta\big)
\,\leq\,
\Big(1-
\exp-\Big(2\exp(-\beta\delta)\Big)
\Big)^{\beta/2-1}
\,.\cr
}$$
Hence the above probability is also SES.
  \epr
  \bc{lj}
    Let $d\ge 2$ and let $K,L>0$. 
Let $R_\beta$ be the cylinder
    $$R_\beta\,=\,\L^{d-1}(\exp{(\b K)}) \times \{\,0,\dots,
    \exp({\b L}) 
    \,\}\,.$$
Let
$\kappa>0$ be such that $\kappa<L+\kappa_{d-1}$ and 
$\tau_\beta=\exp(\b\kappa)$.
Suppose that theorem~\ref{T2}
has been proved in dimension $d-1$.
Then the probability that
      $\floo_{R_\beta;\tau_\beta} 
        \text{ crosses } R_\beta$
	is SES.
  \ec
  \bpr
  For $i\in\N$, let $\tau_i$
  be the first time when
  a site of the layer
  $$\L^{d-1}(\exp{(\b K)}) \times \{\,i\beta\,\}$$
  becomes occupied in the process
  $(\sandw_{R_\beta;t})_{t\geq 0}$. 
  Let us set
  $$l\,=\,\left\lfloor\frac{\exp({\b L})}{\beta}\right\rfloor\,.$$
  With these definitions, we see that
      if $\sandw_{R_\beta;\tau_\beta} 
        \text{ crosses } R_\beta$,
	then $\tau_l\leq\tau_\beta$.
	Yet
	$$\tau_l\,=\,\sum_{0\leq i<l}\tau_{i+1}-\tau_i$$
	and moreover, by using the Markov property and the attractivity
	of the process, we see that, for any $i\geq 0$, the time
	$\tau_{i+1}-\tau_i$ stochastically dominates the time $\tau_1$.
	Therefore
	$$\displaylines{ 
	\P( \tau_l\leq\tau_\beta)\,\leq\,
	\P\Big( \exists i<l\quad
	\tau_{i+1}-\tau_i\leq
	l^{-1}\exp({\b \kappa})\Big)
	\cr
	\,\leq\,
	l\,\P\Big( 
	\tau_1\leq
	l^{-1}\exp({\b \kappa})\Big)
	\,.}$$
	By hypothesis, we have $\kappa-L<\kappa_{d-1}$.
	Proposition~\ref{lk} implies that this last bound is SES.
  \epr
  \subsection{Conclusion of the proof
of theorem \ref{T2}}

  We proceed now by induction over the dimension $d$.
  The case of dimension $0$ is straightforward. In this case
  the lattice $\Z^0$ is reduced to the singleton $\{0\}$ and
  $\kappa_0=\Gamma_0$, $L_0=0$. In particular, it is impossible
  to see an occupied cluster of diameter strictly larger than $0$.
%
  Let $d\geq 1$.
  Suppose that the result has been proved in dimension $d-1$.
Let $L>0$ and let $\L_\beta=\L(\exp(\beta L))$ be a $d$--dimensional cubic
box of sidelength $\exp(\beta L)$.
Let $\kappa<\kappa_d$
and let $\tau_\beta=\exp(\beta \k)$.

  We apply corollary~\ref{lj} to show that, up to a SES event,
$\s_{\L_\beta,\tau_\beta}$ is included in the configuration $\rho$.
Indeed, suppose that it is not the case.
Then the configuration
$\smash{\st_{\L_\beta,\tau_\beta}^\xi}$ is also
not included in $\rho$.
Yet the configuration $\xi$ is
an union 
of occupied parallelepipeds,
which are pairwise at distance larger than or equal to 
${(2\beta)}^{-1}\exp(\beta L_d)$ (see \eqref{defxi}), 
and the configuration $\rho$
is obtained from $\xi$ by dilating these parallelepipeds with
a distance
${(2\beta)}^{-1}\exp(\beta L_d)$.
We consider the first time and place when the process
$(\smash{\st_{\L_\beta,t}^\xi})_{t\geq 0}$ 
occupies a site not occupied in $\rho$.
This happens close to the boundary of a face $F$ of
one of the parallelepipeds $Q$
occupied in $\rho$. Let $R_\beta$ be the cylinder included in $Q$
having for basis
this face $F$ and for height
${(2\beta)}^{-1}\exp(\beta L_d)$.
By corollary~\ref{lj},
the probability that
      $\floo_{R_\beta;\tau_\beta} 
        \text{ crosses } R_\beta$
	is SES.
	Since the number of choices of times and places above is exponential
	in $\beta$, we conclude that, up to a SES event,
the configuration
$\smash{\sigma_{\L_\beta,\tau_\beta}}$ is 
included in $\rho$.
This estimate, together with
proposition~\ref{tb1}, implies
theorem~\ref{T2}.

\section{Proof of the upper bound of theorem \ref{T1fv}}
Let $L>0$ and let $\L_\beta=\L(\exp(\beta L))$ be a cubic
box of sidelength $\exp(\beta L)$.
Let $\kappa<\max(\Gamma_d-dL,\kappa_d)$
and let $\tau_\beta=\exp(\beta \k)$.
We distinguish three different cases.
\medskip

\noindent
$\bullet$ First case: $\kappa<\Gamma_d-dL$.
If the origin is occupied at time $\tau_\beta$ for
the growth process in $\L_\beta$, then a nucleation
must have taken place in the box $\L_\beta$ before
the time $\tau_\beta$, thus
$$\displaylines{
\P\tonda{\s_{\L_\beta;\tau_\beta}(0)=1}\,\leq\,
|\Lambda_\beta|\,\left(
1-\exp\big(-c_\beta(0)\tau_\beta\big)
\right)
\,.
}$$
Taking logarithm, we see that
$$\limsup_{\beta\to\infty}\,\,
\frac{1}{\beta}\ln
\P\tonda{\s_{\L_\beta;\tau_\beta}(0)=1}\,\leq\,
dL-\Gamma_{d}+\kappa\,.$$
Yet
$\kappa<\Gamma_d-dL$ and the probability that the origin is occupied
at time $\tau_\beta$ for
the growth process in $\L_\beta$ is therefore ES in $\beta$.
\medskip

\noindent
$\bullet$ Second case: $\kappa<\Gamma_{d-1}$.
Let
$\L'_\beta
=\L(3\beta)$
be a cubic
box of sidelength $3\beta$.
Suppose that the origin is occupied at time $\tau_\beta$ for
the growth process in $\L_\beta$. The droplet which has reached the origin
is either born inside the box
$\L'_\beta$ or outside of it. In the first scenario, a nucleation
event 
must have taken place in the box $\L'_\beta$ before
the time $\tau_\beta$. In the second scenario
there is an occupied cluster in
	  $\s_{\L_\beta;\tau_\beta}$
with diameter larger than $\b$. 
We have thus
$$\displaylines{ 
\P\tonda{\s_{\L_\beta;\tau_\beta}(0)=1}\,\leq\,
\P
\left(
\begin{matrix}
\text{a nucleation
event takes}\\
\text{place in $\L'_\beta$ before
$\tau_\beta$}
\end{matrix}
\right)
\hfill\cr
\,+\,
\P
\left(
\begin{matrix}
\text{there is an occupied cluster in 
	  $\s_{\L_\beta;\tau_\beta}$
    }\\
\text{whose $\dinf$ diameter
is larger than $\b$}
\end{matrix}
\right)\,.
}$$
Proceeding as in the first case, we bound the probability of a nucleation
by
$$|\Lambda'_\beta|\,
\Big(
1-\exp\big(-c_\beta(0)\tau_\beta\big)
\Big)
\,$$
which is 
ES in $\beta$ since 
$\kappa<\Gamma_{d-1}\leq \Gamma_d$.
By proposition~\ref{inigro},
the probability that an occupied cluster in
	  $\s_{\L_\beta;\tau_\beta}$
has diameter larger than $\b$ is super--exponentially
small in $\beta$. 
\medskip

\noindent
$\bullet$ Third case: $\kappa<\kappa_{d}$.
Let
$\L'_\beta
=\L\big(3\exp(\beta L_d)\big)$
be a cubic
box of sidelength 
$3\exp(\beta L_d)$.
Suppose that the origin is occupied at time $\tau_\beta$ for
the growth process in $\L_\beta$. The droplet which has reached the origin
is either born inside the box
$\L'_\beta$ or outside of it. In the first scenario, a nucleation
event 
must have taken place in the box $\L'_\beta$ before
the time $\tau_\beta$. In the second scenario
there is an occupied cluster in
	  $\s_{\L_\beta;\tau_\beta}$
with diameter larger than 
$\exp(\beta L_d)$.
We have thus
$$\displaylines{ 
\P\tonda{\s_{\L_\beta;\tau_\beta}(0)=1}\,\leq\,
\P
\left(
\begin{matrix}
\text{a nucleation
event takes}\\
\text{place in $\L'_\beta$ before
$\tau_\beta$}
\end{matrix}
\right)
\hfill\cr
\,+\,
\P
\left(
\begin{matrix}
\text{there is an occupied cluster in 
	  $\s_{\L_\beta;\tau_\beta}$
    }\\
\text{whose $\dinf$ diameter
is larger than 
$\exp(\beta L_d)$}
\end{matrix}
\right)\,.\cr
}$$
Proceeding as in the first case, we bound the probability of a nucleation
by
$$|\Lambda'_\beta|\,
\Big(
1-\exp\big(-c_\beta(0)\tau_\beta\big)
\Big)
\,.$$
Taking logarithm, we see that
$$\limsup_{\beta\to\infty}\,\,
\frac{1}{\beta}\ln
\P
\left(
\begin{matrix}
\text{a nucleation
event takes}\\
\text{place in $\L'_\beta$ before
$\tau_\beta$}
\end{matrix}
\right)
\,\leq\,
dL_d-\Gamma_{d}+\kappa
\,<0\,.$$
By proposition~\ref{inigro},
the probability that an occupied cluster in
	  $\s_{\L_\beta;\tau_\beta}$
has diameter larger than 
$\exp(\beta L_d)$
is super--exponentially
small in $\beta$. 
\medskip

\noindent
In the three cases, the probability
$$\P\tonda{\s_{\L_\beta;\tau_\beta}(0)=1}$$
is
ES in $\beta$.

\section{Proof of the lower bound of theorem \ref{T1fv}}

We prove here
part 2 of theorem \ref{T1fv}
by induction over the dimension~$d$.
Let us consider first the case $d=0$.
We have then $\kappa_0=\Gamma_0$.
The box 
$\L_\beta$ is reduced to the singleton $\{\,0\,\}$.
Let $\kappa>\kappa_0$ and let $\tau_\beta=\exp(\beta \k)$.
We have
\begin{equation*}
\P\tonda{\s_{\L_\beta;\tau_\beta}(0)=0}\,=\,
\exp-(c_\beta(0)\tau_\beta)
\,=\,
\exp-(c_\beta(0)\exp(\beta\kappa))
\,.
\end{equation*}
Since by hypothesis,
\begin{equation*}
\lim_{\beta\to\infty}\,\,
\frac{1}{\beta}\ln c_\beta(0)\,=\,
-\G_{0}\,
\end{equation*}
we conclude that the above probability is SES.
We suppose now that $d\geq 1$ and that the result has been proved
in dimension $d-1$.
Let $L>0$ and let $\L_\beta=\L(\exp(\beta L))$ be a cubic
box of sidelength $\exp(\beta L)$.
Let $\kappa>0$ and let $\tau_\beta=\exp(\beta \k)$.
Let $\varepsilon>0$.
We define the nucleation time 
$\tau_{\text{nucleation}}$
in $\Lambda_\beta$ as
\begin{equation*}
\tau_{\text{nucleation}}
  \,=\,\inf\,\big\{\,t\geq 0:\exists\, x\in\Lambda_\beta
  \quad
\s_{\L_\beta;t}(x)=1\,\big\}\,.
\end{equation*}
We have
$$
\forall t>0\qquad
\P(
  \tau^N\,>\,t)
  =
\exp\big(- |\L_\beta|\, c_\beta(0)\,t\big)\,.
$$
Therefore,
up to a SES event, the first nucleus in the box
$\L_\beta$
appeared before time
$$\exp\Big(\beta\big(
\Gamma_{d}-dL+\varepsilon\big)
\Big)\,.
$$
For $i\geq 1$, we define
the first time $\tau^i$ when there is an occupied parallelepiped
of diameter
larger than or equal to $i$ in $\Lambda_\beta$, i.e., 
$$\displaylines{
  \tau^i\,=\,\inf\,
  \left\{
  \,t\geq 0:
\begin{matrix}
\text{there is an occupied parallelepiped
  included in $\Lambda_\beta$ 
}\\
\text{ whose
$\dinf$ diameter
is larger than or equal to $i$}
\end{matrix}
\right\}
}$$
The restriction of the process
$(\s_{\L_\beta;t})_{t\geq 0}$
to the sites which are the neighbors of a face of an occupied parallelepiped
is a $d-1$ dimensional growth process whose rates satisfy the hypothesis
of our model. From the induction hypothesis,
we know that, up to a SES event, the $d-1$ dimensional process in a box
of sidelength $\exp(\beta K)$ is fully occupied at
a time
$$\exp\Big(\beta\big(
\max(\Gamma_{d-1}-(d-1)K,\kappa_{d-1})+\varepsilon\big)
\Big)\,.
$$
This implies that, up to a SES event, 
the box $\L_\beta$ is fully occupied
at time
$$\displaylines{ 
\tau^ {\exp(\beta L)}
\,\leq\,
\tau_{\text{nucleation}}
\,+\,
\sum_{1\leq i< 
{\exp(\beta L)}
}
(\tau^{i+1}
- \tau^{i})
\,\leq\,
\exp\Big(\beta\big(
\Gamma_{d}-dL+\varepsilon\big)
\Big)
\hfill\cr
+
\sum_{1\leq i< \exp(\beta L)
}
2d\exp\Big(\beta\big(
\max(\Gamma_{d-1}-\frac{d-1}{\beta}\ln i,\kappa_{d-1})+\varepsilon\big)
\Big)
}$$
We consider two cases.

\noindent
$\bullet$ First case: $L\leq L_{d-1}$.
Notice that $L_0=0$, hence this case can happen only whenever $d\geq 2$.
In this case, we have 
$$
\forall i< \exp(\beta L)\qquad
\kappa_{d-1}\,\leq\,
\Gamma_{d-1}-\frac{d-1}{\beta}\ln i$$
and 
$$\displaylines{ 
\sum_{1\leq i< \exp(\beta L)}
\exp\Big(\beta
\max(\Gamma_{d-1}-\frac{d-1}{\beta}\ln i,\kappa_{d-1})
\Big)
\hfill\cr
\,\leq\,
\exp(\beta
\Gamma_{d-1})
\sum_{1\leq i< \exp(\beta L)}
\frac{1}{i^{d-1}}
\cr
\,\leq\,
\exp(\beta
\Gamma_{d-1})
\sum_{1\leq i< \exp(\beta L)}
\frac{1}{i}
\,\leq\,
\beta L
\exp(\beta
\Gamma_{d-1})\,.
}$$
\noindent
$\bullet$ Second case: $L> L_{d-1}$. We have then
$$\displaylines{ 
\sum_{
\exp(\beta L_{d-1})
\leq i< \exp(\beta L)}
\exp\Big(\beta
\max(\Gamma_{d-1}-\frac{d-1}{\beta}\ln i,\kappa_{d-1})
\Big)
\hfill\cr
\,\leq\,
\Big(
\exp(\beta L)-
\exp(\beta L_{d-1})\Big)
\exp(\beta \kappa_{d-1})
\cr
\,\leq\,
\exp\Big(\beta (L+\kappa_{d-1})\Big)\,.
}$$
We conclude that, in both cases, for any 
$\varepsilon>0$, up to a SES event,
the box $\L_\beta$ is fully occupied
at a time
$$\displaylines{ 
2d\beta L\exp(\beta\varepsilon)
\left(
\exp\Big(\beta(
\Gamma_{d}-dL)
\Big)
+
\exp(\beta
\Gamma_{d-1})
+
\exp\Big(\beta (L+\kappa_{d-1})\Big)
\right)\,.
}$$
Therefore, for any $\kappa$ such that
$$\kappa\,>\,
\max\big(
\Gamma_{d}-dL,
\Gamma_{d-1},
L+\kappa_{d-1}\Big)$$
the probability that 
the box $\L_\beta$ is not fully occupied
at a time
$\exp(\beta\kappa)$
is SES.
If $L\leq L_d$ then
$$\max\big(
\Gamma_{d}-dL,
\Gamma_{d-1},
L+\kappa_{d-1}\Big)
\,=\,
\Gamma_{d}-dL
$$
and we have the desired estimate.
Suppose next that
$L> L_d$. By the previous result,
we know that,
for any $\kappa>\kappa_d$,
up to a SES event,
a box of sidelength
$\exp(\beta L_d)$
is fully occupied
at a time
$\exp(\beta\kappa)$.
We cover $\L_\beta$ by boxes
of sidelength
$\exp(\beta L_d)$. Such a cover contains at most
$\exp(\beta dL)$ boxes, thus
$$\displaylines{ 
\P\tonda{
\text{$\L_\beta$ is not fully occupied at time $\tau_\beta$}
}\hfill\cr
\,\leq\,
\P\left(
\begin{matrix}
\text{there exists a box included in $\L_\beta$
of sidelength 
}\\
\text{
$\exp(\beta L_d)$
which is not fully occupied at time $\tau_\beta$}
\end{matrix}
\right)\cr
\,\leq\,
\exp(\beta dL)\,
\P\tonda{
\begin{matrix}
\text{the box $\L(
\exp(\beta L_d))$ is not}\\
\text{fully occupied at time $\tau_\beta$}
\end{matrix}
}\,.
}$$
The last probability being SES, we are done.

%
\medskip

\noindent
{\bf Acknowledgements:} 
Rapha\"el Cerf thanks Roberto Schonmann for discussions
on this problem while he visited UCLA in 1995.

\bibliographystyle{alpha}
\bibliography{cema}
\end{document}